\def\ds{\displaystyle}
\def\ds{\displaystyle}
\newcommand{\R}{{\Bbb R}}
\newcommand{\M}{{\Bbb M}}
\newcommand{\A}{{\Bbb A}}
\newcommand{\loc}{{\rm loc}}
\newcommand{\sign}{{\rm sign\,}}
\newcommand{\grad}{{\rm grad\,}}
\newcommand{\dist}{{\rm dist\,}}
\newcommand{\mO}{{\mathcal O}}
\def\Xint#1{\mathchoice 
{\XXint\displaystyle\textstyle{#1}}%
{\XXint\textstyle\scriptstyle{#1}}%
{\XXint\scriptstyle\scriptscriptstyle{#1}}%
{\XXint\scriptscriptstyle\scriptscriptstyle{#1}}%
\!\int} 
\def\XXint#1#2#3{{\setbox0=\hbox{$#1{#2#3}{\int}$ } 
\vcenter{\hbox{$#2#3$ }}\kern-.6\wd0}}
\def\tl{\widetilde}
\def\ol{\overline}
\def\B{{\mathcal B}}
\def\F{{\Bbb  F}}
\def\NN{{\mathcal N}}
\def\bA{{\mathbf A}}
\def\div{{\rm div\,}}
\def\loc{{\rm loc}}
\def\ba{{\mathbf a}}
\def\bu{{\mathbf u}}
\def\bz{{\mathbf z}}
\def\bb{{\mathbf b}}
\def\diam{{\rm diam\,}}
\newtheorem{thm}{Theorem}[section]  
\newtheorem{lem}[thm]{Lemma}	       
\newtheorem{crlr}[thm]{Corollary}      
\newtheorem{defin}[thm]{Definition}    
\newtheorem{rem}[thm]{Remark}	       
\numberwithin{equation}{section}
\begin{document}

\baselineskip=14pt

\title[Quasilinear systems in Morrey spaces]{Precise Morrey regularity  of the  weak   solutions to  a kind of quasilinear  systems with discontinuous  data}

\author[L. Fattorusso]{Luisa Fattorusso}
\address{       Department of Information Engineering,
 Infrastructure and Sustainable Energy,
Mediterranea University,
 of Reggio Calabria, Italy\\
 e-mail:  luisa.fattorusso@unirc.it}

\author[L. Softova]{Lubomira Softova}
\address{Department of Mathematics
University of Salerno, Italy\\
 e-mail:  lsoftova@unisa.it}

\maketitle
\renewcommand{\thefootnote}{}

\footnote{2010 \emph{Mathematics Subject Classification}: Primary 35J55; Secondary 35B40}

\footnote{\emph{Key words and phrases}: Quasilinear elliptic  systems, weak solutions,  controlled growth conditions, BMO, Dirichlet data, Reifenberg-flat domain, Morrey spaces}

\renewcommand{\thefootnote}{\arabic{footnote}}
\setcounter{footnote}{0}

\begin{abstract}
We consider the Dirichlet problem for a class of quasilinear elliptic systems   in domain  with irregular boundary.
 The principal part satisfies componentwise coercivity condition   and  the nonlinear   terms  are Carath\'eodory maps having Morrey regularity in $x$ and verifying controlled growth conditions with respect to the other variables.
 We have obtained boundedness  of the    weak  solution  to the problem that  permits to apply an iteration procedure in order to find  optimal   Morrey regularity of its   gradient.
\end{abstract}

\section{Introduction}

We are interested in  the regularity properties of a kind of quasilinear elliptic  operators with discontinuous data acting in a bounded domain $\Omega,$   with irregular  boundary $\partial\Omega.$  Precisely, we consider the following Dirichlet problem 
\begin{equation}\label{DP}
\begin{cases}
\div\big(\bA(x) D\bu  +\ba(x,\bu) \big)= \bb(x,\bu,D\bu) &  x\in\Omega \\
\bu(x)=0\quad  & x\in  \partial\Omega\,.
\end{cases}
\end{equation}
Here  $\Omega\subset\R^n,$ $n\geq 2$ is a bounded  {\it Reifenberg-flat  domain}, the coefficients  matrix
$\bA=\{A_{ij}^{\alpha\beta}(x)\}^{\alpha,\beta\leq n}_{i,j\leq N}$ is   essentially bounded  in $\Omega$  and the non linear terms 
$$
\ba(x,\bu)=\{a^\alpha_i(x,\bu)\}^{\alpha\leq n}_{i\leq N} \quad \text{ and  } \quad 
\bb(x,\bu,\bz)=\{b_i(x,\bu,\bz)\}_{i\leq N}
$$
 are  Carath\'eodory maps, i.e., they are measurable in $x\in \Omega $ for all    $\bu\in \R^N,$ $\bz \in \M^{N\times n}$ and continuous in $(\bu,\bz)$ for almost all $x\in \Omega.$   Since we are going to study the  weak solutions of \eqref{DP} we need  to impose  {\it controlled growth conditions} on the nonlinear  terms  in order to ensure  convergence of the integrals in the definition  \eqref{weak}. 
For this aim we suppose that (cf.  \cite{G,Sf1})
\begin{align*}
a^\alpha_i(x,\bu)&= \mO(\varphi_1(x) +|\bu|^{\frac{n}{n-2}}), \\  
b_i(x,\bu,\bz)&=\mO(\varphi_2(x)+|\bu|^{\frac{n+2}{n-2}}+|\bz|^{\frac{n+2}{n}})
\end{align*}
for $n>2.$  In the particular case $ n = 2,$ the powers of
$|\bu|$ could be arbitrary positive numbers, while the growth of  $|\bz|$ is subquadratic.

Our aim is to study the dependence of the solution from the regularity of the data  and to obtain 
Calder\'on-Zygmund type estimate  in an optimal Morrey space.

There are  various papers dealing with  the integrability and regularity properties  of  different kind of  quasilinear and nonlinear differential   operators. Namely, it is studied the question {\it  how the regularity of the data influences  on the regularity of the  solution.} In the scalar case  $N=1$  the celebrated result of De Giorgi and Nash  asserts that  {\it  the weak solution of linear elliptic and parabolic   equations with only $L^\infty$ coefficients is H\"older continuous} \cite{DeG}   

 Better integrability can be obtained also by the  result of Gehring \cite{Gh} relating to {\it  functions satisfying the inverse  H\"older inequality.}  Later Giaquinta and Modica \cite{GMo} noticed that certain power of the gradient of a function $u\in W^{1,p}$ satisfies locally the reverse H\"older inequality. Modifying Gehring's lemma  they  obtained  {\it  better integrability for the weak solutions of some quasilinear elliptic equations.} Their pioneer  works have been followed  by extensive research  dedicated to the regularity properties of various partial differential   operators  using the  Gehring-Giaquinta-Modica technique,   called also a  {\it  ''direct method"} (cf.  \cite{Ar,PS1,PS2} and the references therein.)   Recently,  {\it the method of $A$-harmonic approximation}  permits to study  the  regularity    without using  Gehring's lemma  (see for example \cite{AM}).

The theory for  linear divergence form   operators  defined in Reifenberg's domain   was developed firstly  in    \cite{BW1, BW}. 
In \cite{BP1, BPSh} the authors extend this theory to  quasilinear uniformly elliptic equations in the Sobolev-Morrey spaces. Making use of the  Adams  inequality  \cite{Ad} and the  Hartmann-Stampacchia maximum principal they obtain H\"older regularity of the solution while in \cite{BS1} it is obtained generalized H\"older regularity for regular  and nonregular nonlinear elliptic equations. 

As it concerns nonlinear nonvariational operators
 we can mention the results of Campanato \cite{C} relating  to basic  systems of the form $F(D^2u)=0 $ in  the  Morrey  spaces. 
Afterwards  Marino and  Maugeri in \cite{MM} have contributed  to 
this  theory  with their own research in the boundary regularity about the basic systems. 
Imposing differentiability of the operator $F $ they obtain, via immersion theorems, Morrey regularity of the second derivatives 
$D^2u\in L^{2,2-\frac{2}{q}}, q>2.$ These studies have been extended in   \cite{FT} to nonlinear 
   equations of a kind  $F(x,D^2u) $ without  any differentiability assumptions  on $F.$  It is obtained  global  Morrey regularity   via the {\em Korn trick} and the {\it near operators theory}  of Campanato. Moreover,   in the variational case it is  established  a Caccioppoli-type inequality for a second-order degenerate elliptic systems of $p$-Laplacian type \cite{FMT}.  Exploiting the   classical Campanato's approach and the {\it hole-filling technique} due to Widman, it is proved a global regularity result for the gradient of $\bu$ in the  Morrey and   Lebesgue  spaces. 

 \smallskip

In the present work we consider quasilinear systems in divergence form with a principal part satisfying {\it componentwise coercivity condition.} This condition permits to apply the result of \cite{PSf3,Sf1}  that gives $L^\infty$ estimate  of  the weak  solution.  In addition the  {\it controlled growth conditions} imposed  on the nonlinear  terms allow to apply the integrability result from \cite{Sf2}. Making use of step-by-step technique we show optimal Morrey regularity of the gradient depending explicitly on the regularity of the data.

\smallskip

 In what follows  we use the  standard  notation:
\begin{itemize}
\item $\Omega$ is a bounded domain in $\R^n,$ with a Lebesgue measure $|\Omega|$ and boundary $\partial\Omega;$
\item
 $B_\rho(x)\subset \R^n$  is a ball,  
$\Omega_\rho(x)=\Omega\cap B_\rho(x)$ with  $ \rho\in (0,\diam\Omega],$  $x\in \Omega;$
\item   
  $\M^{N\times n}$  is   the set of  $N\times n$-matrices.
\item    $\bu= (u^1,\ldots,u^N):\Omega\to \R^N, $ \quad   $ \ds   D_\alpha u^j=\partial u^j/\partial x_\alpha, 
$\\
$\ds |\bu|^2=\sum_{j\leq N} |u^j|^2, $ 
  \quad
$
  D\bu=\{D_\alpha u^j\}^{\alpha\leq n}_{j\leq N}\in \M^{N\times n}, $\\
$  
\ds  |D\bu|^2=\sum_{\underset{j\leq N}{\alpha \leq n}}|D_\alpha u^j|^2 ;
$
\item  For $\bu\in L^p(\Omega;\R^N)$ we write $\|\bu\|_{p,\Omega}$ instead of
 $\|\bu\|_{ L^p(\Omega;\R^N)}$
\item 
The spaces   $W^{1,p}(\Omega;\R^N)$ and  $W_0^{1,p}(\Omega;\R^N)$ are the classical Sobolev spaces as they are defined in \cite{GT}.
\end{itemize} 

 Through all the paper the standard summation convention on repeated upper and lower indexes is adopted. 
The letter $C$ is used for various positive  constants and may change from one occurrence to another.

\section{Definitions and auxiliary  result}

In \cite{R}  Reifenberg  introduced a class of domains with rough  boundary  that  can be approximated locally  by hyperplanes. 
\begin{defin}\label{reifenberg}\rm
The domain $\Omega$ is $(\delta,R)$  Reifenberg-flat if there exist positive constants $R$ and  $\delta<1 $ such that for each $x\in \partial \Omega$
and each $\rho\in(0,R)$ there is a local coordinate system $\{y_1,\ldots,y_n\}$ with the property
\begin{equation}\label{eq2}
\B_\rho(x)\cap \{y_n>\delta\rho \}\subset\Omega_\rho(x)\subset\B_\rho(x)\cap\{y_n>-\delta\rho \}.
\end{equation}
\end{defin}
Reifenberg  arrived at this concept of flatness in his studies on the  Plateau   problem  in higher dimensions and he proved that such a domain is locally a topological disc when $\delta$  is small enough, say $\delta<1/8.$   It is easy to see that 
a  $C^1$-domain is  a  Reifenberg flat  with $\delta\to 0 $ as $R\to 0.$  A domain with Lipschitz boundary with a Lipschitz  constant less than $\delta$ also verifies the condition \eqref{eq2} if  $\delta$  is small enough, say $\delta<1/8$,  (see \cite[Lemma~5.1]{BW}).      But the class of Reifenberg's  domains is much  more wider  and contains domains with fractal boundaries. For instance, consider
a self-similar snowflake  $S_\beta.$  It is  a flat version of the Koch snowflake $S_{\pi/3}$  but with   angle of the spike $\beta$ such that
$\sin\beta\in(0,1/8).$   
This kind of flatness exhibits  minimal geometrical conditions necessary for some natural properties  from the  analysis and potential theory to hold.  For more detailed overview  of  these  domains
we refer the reader to  \cite{T} (see also \cite{BW1, PS1} and the references therein).

In addition  \eqref{eq2}  implies the $(A)$-property  (cf. \cite{G,PS2}).  
Precisely,  there exists a positive  constant  $A(\delta)<1/2$ such that 
\begin{equation}\tag{A}\label{A}
 A(\delta)|\B_\rho(x)| \leq |\Omega_\rho(x)|\leq (1- A(\delta))|\B_\rho(x)|
\end{equation}
for any fixed $x\in\partial\Omega,$ $\rho\in(0,R)$ and $\delta\in(0,1).$  This condition excludes that $\Omega $  may have sharp outward and inward cusps.
As consequence, the Reifenberg domain  is  $W^{1,p}$-extension domain, $1\leq p\leq \infty,$ hence 
the usual extension theorems, the  Sobolev and Sobolev-Poincar\'e inequalities are still valid in $\Omega$ up to the boundary.  
\begin{defin}\label{Morrey}
A real valued function $f\in L^p(\Omega)$ belongs  to the Morrey space $L^{p,\lambda}(\Omega)$ with $p\in[1,\infty),$ $\lambda\in(0,n),$   if  
$$
 \|f\|_{p,\lambda;\Omega}= \left(\sup_{\B_\rho(x) }\frac{1}{\rho^\lambda}\int_{\Omega_\rho(x)} |f(y)|^p\, dy \right)^{1/p} <\infty
$$
where $\B_\rho(x)$ ranges in the set of all balls with radius $\rho \in(0,\diam\Omega]$  and  $x\in \Omega.$ 
\end{defin}

In  \cite{Mo} Morrey obtained local H\"older regularity of the solutions to second order elliptic equations. His new approach consisted  in estimating the growth of the integral function  $g(\rho)=\int_{\B_\rho}|Du(y)|^p dy$ via a  power of the radius of the same ball, i.e.,  $C \rho^\lambda$ with $\lambda>0.$  Nevertheless that he did not talk about function spaces, his paper is considered as the starting point for the theory of the {\it Morrey spaces}   $L^{p,\lambda}. $

The family of the   $L^{p,\lambda} $   spaces is partially ordered.  (cf. \cite{Pi}).
\begin{lem}\label{lem1}
 For $1\leq r'\leq r''<\infty$ and $\sigma',\sigma''\in[0,n)$ the following embedding holds
\begin{equation}\label{MorEmbed}
L^{r''\sigma''}(\Omega)\hookrightarrow  L^{r',\sigma'}(\Omega)\qquad \text{ iff } \quad \frac{n-\sigma'}{r'}\geq \frac{n-\sigma''}{r''}\,.
\end{equation}
Furthermore, we have the continuous inclusion 
\begin{equation}\label{MorIncl}
L^{\frac{n r'}{n-\sigma'}}(\Omega) \hookrightarrow  L^{r',\sigma'}(\Omega)\,.
\end{equation}
\end{lem}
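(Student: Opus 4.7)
My plan is to derive the sufficient direction of \eqref{MorEmbed} by a H\"older-inequality argument on each ball $\Omega_\rho(x)$, to establish necessity by exhibiting a suitable Riesz-potential test function, and then to obtain \eqref{MorIncl} as the degenerate case $\sigma''=0$ of \eqref{MorEmbed}, using that for bounded $\Omega$ one has $L^{r,0}(\Omega)=L^r(\Omega)$ with equivalent norms.

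For sufficiency, I fix $x\in\Omega$ and $\rho\in(0,\diam\Omega]$. Since $r''\geq r'$, H\"older's inequality with conjugate exponents $r''/r'$ and $r''/(r''-r')$ yields
$$
\int_{\Omega_\rho(x)}|f(y)|^{r'}\,dy \leq \left(\int_{\Omega_\rho(x)}|f(y)|^{r''}\,dy\right)^{r'/r''} |\Omega_\rho(x)|^{1-r'/r''}.
$$
Bounding the first factor by $\|f\|_{r'',\sigma'';\Omega}^{r'}\,\rho^{\sigma''r'/r''}$ and using $|\Omega_\rho(x)|\leq C\rho^n$, I obtain
$$
\frac{1}{\rho^{\sigma'}}\int_{\Omega_\rho(x)}|f|^{r'}\,dy \leq C\,\|f\|_{r'',\sigma'';\Omega}^{r'}\,\rho^{\gamma},\qquad \gamma=\frac{\sigma''\,r'}{r''}+n\left(1-\frac{r'}{r''}\right)-\sigma'.
$$
A short calculation shows that $\gamma\geq 0$ is equivalent to $(n-\sigma')/r'\geq(n-\sigma'')/r''$; taking the supremum over admissible balls then proves the embedding.

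For the converse, I would test against $f(y)=|y-x_0|^{-\alpha}$ truncated to a ball $\B_R(x_0)\subset\Omega$. A polar-coordinate computation gives $\int_{\B_\rho(x_0)}|y-x_0|^{-p\alpha}\,dy\sim\rho^{n-p\alpha}$, so $f\in L^{p,\lambda}(\Omega)$ precisely when $\alpha\leq(n-\lambda)/p$ (with $p\alpha<n$ for local integrability). If the embedding condition failed, one could pick $\alpha$ strictly between $(n-\sigma')/r'$ and $(n-\sigma'')/r''$, producing $f\in L^{r'',\sigma''}(\Omega)\setminus L^{r',\sigma'}(\Omega)$ and contradicting the assumed inclusion. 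Finally, \eqref{MorIncl} follows by setting $\sigma''=0$ and $r''=nr'/(n-\sigma')$ in \eqref{MorEmbed}: the values $(n-\sigma'')/r''$ and $(n-\sigma')/r'$ coincide, the embedding condition holds with equality, and the identification $L^{nr'/(n-\sigma'),0}(\Omega)=L^{nr'/(n-\sigma')}(\Omega)$ on the bounded $\Omega$ closes the argument. The only subtle point is calibrating the singular test function in the necessity direction; the rest is a routine application of H\"older.
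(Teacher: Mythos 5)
Your proposal is correct, but there is nothing in the paper to compare it with: the authors state this lemma as a known result, citing Piccinini's note on inclusions between Morrey spaces, and give no proof at all. Your argument is the standard self-contained one and it works. The sufficiency step is exactly the classical computation: H\"older on $\Omega_\rho(x)$ with exponents $r''/r'$ and $r''/(r''-r')$, the bound $|\Omega_\rho(x)|\leq C\rho^n$, and the observation that the resulting exponent $\gamma=\frac{\sigma''r'}{r''}+n\bigl(1-\frac{r'}{r''}\bigr)-\sigma'$ is nonnegative precisely when $\frac{n-\sigma'}{r'}\geq\frac{n-\sigma''}{r''}$, so that $\rho^\gamma\leq(\diam\Omega)^\gamma$ uniformly; this gives the continuity of the embedding with an explicit constant. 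For the necessity direction your truncated power $|y-x_0|^{-\alpha}$ with $\frac{n-\sigma'}{r'}<\alpha<\frac{n-\sigma''}{r''}$ does the job: the strict upper bound guarantees $r''\alpha<n$ and $n-r''\alpha>\sigma''$, hence membership in $L^{r'',\sigma''}(\Omega)$ (the only point you leave implicit is the routine verification for balls centered at points $x\neq x_0$, which reduces to the centered case by $\B_\rho(x)\subset\B_{2\rho}(x_0)$ when $|x-x_0|\leq\rho$ and to a bounded integrand otherwise), while the strict lower bound forces either failure of local $r'$-integrability or blow-up of $\rho^{-\sigma'}\int_{\B_\rho(x_0)}|f|^{r'}$ as $\rho\to0$; note that your parenthetical ``$\alpha\leq(n-\lambda)/p$'' characterization is slightly off at the endpoint when $\lambda=0$, but your strict choices of $\alpha$ never touch that case. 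Finally, deducing \eqref{MorIncl} by taking $\sigma''=0$, $r''=\frac{nr'}{n-\sigma'}$ and using $L^{p,0}(\Omega)=L^p(\Omega)$ on the bounded domain is legitimate, since the lemma's hypothesis allows $\sigma''=0$ and $r''\geq r'$ holds automatically. In short: a correct, elementary proof of a result the paper merely quotes.
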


For $x\in \R^n,$   $I_\alpha$  is the {\it  Riesz potential operator\/} whose convolution kernel is  $|x|^{\alpha -n},$ $ 0<\alpha<n.$
  Suppose that  $f$ is extended as zero in $\R^n$ and consider its Riesz potential   $I_\alpha f(x)=\int_{\R^n}\frac{f(y)}{|x-y|^{n-\alpha}}dy.$ In \cite{Ad} Adams obtained the following   inequality. 
\begin{lem}\label{lem2}   Let
 $f\in L^{r,\sigma}(\R^n),$ then $I_\alpha: L^{r,\sigma} \to L^{ r_\sigma^*,\sigma}$ is continuous  and
\begin{equation}\label{eqRiesz}
\|I_\alpha f\|_{L^{r_\sigma^*,\sigma}(\R^n)}\leq C\|f\|_{L^{r,\sigma}(\R^n)}, \qquad
\end{equation}
where $C$ depends on $n,r,\sigma,|\Omega|, $   and  $r_\sigma^*$ is the {\it Sobolev-Morrey conjugate}
\begin{equation}\label{eqSM}
 r_\sigma^*=\begin{cases}
\frac{(n-\sigma)r}{n-\sigma-r} & \text{ if } r+\sigma<n\\
\text{arbitrary large number } & \text{ if } r+\sigma\geq n\,.
\end{cases}
\end{equation}
\end{lem}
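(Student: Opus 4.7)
The plan is to follow Adams' dyadic decomposition argument combined with the maximal function theorem on Morrey spaces. For fixed $x\in\R^n$ and $\delta>0$ I would split $I_\alpha f(x)=J_1(x)+J_2(x)$, where $J_1$ integrates over $|x-y|<\delta$ and $J_2$ over $|x-y|\ge\delta$, and estimate each piece through dyadic annuli centred at $x$. Replacing the kernel by its supremum $(2^{-k-1}\delta)^{\alpha-n}$ on each annulus $\{2^{-k-1}\delta\le|x-y|<2^{-k}\delta\}$ and summing the resulting geometric series yields
$$
|J_1(x)|\le C\,\delta^\alpha\,\mathcal{M}f(x),
$$
where $\mathcal{M}f$ denotes the Hardy-Littlewood maximal function. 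For $J_2$ I would dominate $\int_{B_{2^{k+1}\delta}(x)}|f|$ by H\"older's inequality with exponent $r$, then insert the defining Morrey bound $\int_{B_\rho(x)}|f|^r\,dy\le\rho^\sigma\|f\|_{r,\sigma}^r$ to obtain $\int_{B_{2^{k+1}\delta}(x)}|f|\le C(2^k\delta)^{n-n/r+\sigma/r}\|f\|_{r,\sigma}$; provided the subcritical condition $r\alpha+\sigma<n$ holds the series in $k$ is geometric, and it gives
$$
|J_2(x)|\le C\,\delta^{\alpha-(n-\sigma)/r}\,\|f\|_{r,\sigma}.
$$
Balancing the two estimates by choosing $\delta^{(n-\sigma)/r}\sim\|f\|_{r,\sigma}/\mathcal{M}f(x)$ produces the pointwise interpolation inequality
$$
|I_\alpha f(x)|\le C\,(\mathcal{M}f(x))^{1-\alpha r/(n-\sigma)}\,\|f\|_{r,\sigma}^{\alpha r/(n-\sigma)}.
$$

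To pass from pointwise to integral form I would raise both sides to the $r^*_\sigma$-th power, observing that by the very definition of the Sobolev-Morrey conjugate in \eqref{eqSM} the exponent of $\mathcal{M}f$ becomes exactly $r$, and integrate over an arbitrary ball $B_\rho(x_0)\subset\R^n$. This reduces the problem to estimating $\int_{B_\rho(x_0)}(\mathcal{M}f)^r\,dx$, which I would bound via the Chiarenza-Frasca theorem on the boundedness of $\mathcal{M}$ on $L^{r,\sigma}$ (valid for $r>1$), yielding the order $\rho^\sigma\|f\|_{r,\sigma}^r$. Dividing by $\rho^\sigma$ and taking the supremum over balls delivers \eqref{eqRiesz}. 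The supercritical case $r+\sigma\ge n$, i.e.~the second branch of \eqref{eqSM}, where $r^*_\sigma$ may be chosen arbitrarily large, is handled separately by estimating $I_\alpha f$ in $L^\infty$ directly, since in that regime the Morrey integrability of $f$ already gives absolute convergence of the potential on each ball.

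The main obstacle is the careful bookkeeping of exponents in the optimization step: the whole scheme pivots on the algebraic identity $r^*_\sigma\bigl(1-\alpha r/(n-\sigma)\bigr)=r$, which is precisely why the Sobolev-Morrey conjugate takes the form in \eqref{eqSM} and requires the strict subcritical condition $r\alpha+\sigma<n$. A secondary delicate point is the use of the Morrey boundedness of $\mathcal{M}$, which restricts the proof to $r>1$; were the endpoint $r=1$ needed, a weak-type substitute would have to replace the strong $L^{r,\sigma}$ bound.
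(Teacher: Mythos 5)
The paper does not prove this lemma at all: it is quoted from Adams \cite{Ad}, and your argument is precisely the standard Hedberg-type proof of that result (near/far splitting of $I_\alpha f$, dyadic annuli and the maximal function for the near part, H\"older plus the Morrey bound for the far part, optimization in $\delta$, then the Chiarenza--Frasca boundedness of $\mathcal{M}$ on $L^{r,\sigma}$). In the subcritical branch $r\alpha+\sigma<n$ your bookkeeping is right, including the identity $r_\sigma^*\bigl(1-\alpha r/(n-\sigma)\bigr)=r$; note that \eqref{eqSM} as printed is the $\alpha=1$ form of the conjugate, which is the only case the paper uses ($I_1$), and the restriction $r>1$ coming from the maximal theorem is harmless since every exponent to which the lemma is applied in the paper exceeds $1$.

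One step would fail as written: your treatment of the second branch of \eqref{eqSM} via an $L^\infty$ bound does not cover the borderline case $r\alpha+\sigma=n$. There the dyadic sum for the near part loses its geometric decay (each term is of size $\|f\|_{r,\sigma}$), $I_\alpha f$ of an $L^{r,\sigma}$ function need not be bounded, and only exponential-type integrability holds; so the claim that Morrey integrability ``already gives absolute convergence of the potential'' is false at equality. The conclusion nevertheless holds and the repair is one line: since $f$ vanishes outside the bounded set $\Omega$, for every small $\varepsilon>0$ one has $L^{r,\sigma}(\Omega)\subset L^{r,\sigma-\varepsilon}(\Omega)$ with constant depending on $\diam\Omega$, and applying your subcritical estimate with $\sigma-\varepsilon$ yields the exponent $\frac{(n-\sigma+\varepsilon)r}{n-\sigma+\varepsilon-\alpha r}$, which can be made arbitrarily large as $\varepsilon\to 0^+$. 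This is also exactly where the dependence of $C$ on $|\Omega|$ in \eqref{eqRiesz} enters (likewise in your $L^\infty$ argument for the strict case $r\alpha+\sigma>n$, whose constant depends on $\diam\Omega$). With that adjustment your proof is complete and coincides with the cited one.
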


The nonlinear terms 
    $\ba(x,\bu)$ and $\bb(x,\bu,\bz)$  satisfy  {\it controlled growth conditions}
\begin{align}\label{contr1}
 & |\ba(x,\bu) |\leq  \Lambda(\varphi_1(x)+|\bu|^{\frac{2^*}{2}}), \\
\nonumber
& \qquad\qquad   \varphi_1\in L^{p,\lambda}  (\Omega), \quad   p>2 , \      p+\lambda>n, \  \lambda\in[0,n),\\
\label{contr2}
&  |\bb(x,\bu,\bz)|\leq   \Lambda\big(\varphi_2(x) + |\bu|^{2^*-1}+|\bz|^{2\frac{(2^*-1)}{2^*}}  \big), \\
\nonumber
&\qquad \qquad  \varphi_2\in L^{q,\mu} (\Omega), \quad   q>\frac{2^*}{2^*-1}, \    2q+\mu>n, \  \mu\in[0,n)
\end{align}
with a positive constant $\Lambda. $ Here $2^*$ is te Sobolev conjugate of 2, i.e.
$2^*=\frac{2n}{n-2 }$  if $ n>2$ and it is arbitrary large number  if $n=2$  (cf. \cite{G, LU,Sf2,Sf1}).

A {\it weak solution} to  \eqref{DP} is  a function  
 $\bu\in W_0^{1,2}(\Omega;\R^N)$   satisfying
\begin{align}\label{weak}
\nonumber
\int_\Omega& A^{\alpha\beta}_{ij}(x)D_\beta u^j(x)D_\alpha\chi^i(x) dx +\int_\Omega 
a^\alpha_i(x,\bu(x))D_\alpha\chi^i(x)dx\\
& +\int_\Omega b_i(x,\bu(x),D \bu(x))\chi^i(x) dx=0,\quad j=1,\ldots,N
\end{align}
for all $\chi\in W_0^{1,2}(\Omega;\R^N)$ where  the convergence of the integrals is ensured by   \eqref{contr1}  and \eqref{contr2}.

\section{Main result}

The general theory of elliptic systems  does not ensure boundedness of the solution if we  impose  only   growth conditions  as  \eqref{contr1} and \eqref{contr2}   (see for example \cite{JS, LP}).    For this goal we need some additional structural restrictions on  the operator as  {\it componentwise coercivity} similar to that imposed in \cite{LP,PSf3,  Sf3,Sf1}. 
 
Suppose that  $\|\bA\|_{\infty,\Omega}\leq \Lambda_0$ and for each fixed  $i\in\{1,\ldots,N\}$ there exist positive constants  $\theta_i$ and  $ \gamma(\Lambda_0)$  such that  for  $|u^i|\geq \theta_i $  we have
\begin{equation}\label{coercivity}
\begin{cases}
\ds \gamma|\bz^i|^2 -\Lambda |\bu|^{2^*} - \Lambda \varphi_1(x)^2\leq \sum_{\alpha=1}^n\left(
A_{ij}^{\alpha\beta} (x) z^j_\beta+ a_i^\alpha(x,\bu) \right) z^i_\alpha\\
  \ds  b_i(x,\bu,\bz) \, \sign u^i(x)\geq -\Lambda\left( \varphi_2(x)+|\bu|^{2^*-1}+|\bz^i|^{2\frac{2^*-1}{2^*}}  \right)
\end{cases}
\end{equation}
for a.a.  $ x\in\Omega$  and for all $\bz\in \M^{N\times n}.$ The functions $\varphi_1$ and $\varphi_2$ are  as in \eqref{contr1} and \eqref{contr2}.
 \begin{thm}\label{th1}
Let $\bu\in W_0^{1,2}(\Omega;\R^N)$ be a weak solution of the problem \eqref{DP} under the conditions \eqref{eq2}, 
\eqref{contr1},   \eqref{contr2} and \eqref{coercivity}.
Then
\begin{equation}\label{qast}
\bu\in W_0^{1,r}\cap  L^\infty(\Omega;\R^N)\  \text{ with }\   r=\min\{p,q^*_\mu\}\,.
\end{equation}
Moreover  
\begin{equation}\label{gradient}
|D\bu|\in L^{r,\nu}(\Omega)\quad \text{ with }\quad \nu=\min\Big\{n+ \frac{r(\lambda-n)}{p}, n+ \frac{r(\mu-n)}{q^*_\mu}  \Big\}
\end{equation}
where $q^*_\mu$ is the Sobolev-Morrey conjugate of $q$ (see \eqref{eqSM}).
\end{thm}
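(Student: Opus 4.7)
My plan has three stages: an $L^\infty$-bound for $\bu$ via the componentwise coercivity, a reformulation as a linear divergence-form system with Morrey data, and a Morrey bootstrap producing the optimal exponents $r$ and $\nu$.

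First, I would establish $\bu \in L^\infty(\Omega;\R^N)$. This is precisely where the structural assumption \eqref{coercivity} is essential. Testing the weak formulation \eqref{weak} component by component with truncations $\chi^i = \max\{|u^i|-k,0\}\sign u^i$ for $k \geq \theta_i$, and combining the lower bound in the first inequality of \eqref{coercivity} with the sign condition on $b_i$, one obtains a Stampacchia-type inequality for the super-level sets $\{|u^i| > k\}$. The Morrey assumptions $p+\lambda > n$ and $2q+\mu > n$ on $\varphi_1, \varphi_2$ make the remainder terms decay fast enough in $|\{|u^i|>k\}|$ for the De Giorgi--Stampacchia lemma to yield $\|\bu\|_\infty \leq C$; this is essentially the $L^\infty$-estimate carried out in \cite{PSf3,Sf1}.

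Once $\bu$ is bounded, conditions \eqref{contr1}--\eqref{contr2} collapse to
\[
|\ba(x,\bu)| \leq C(\varphi_1(x)+1), \qquad |\bb(x,\bu,D\bu)| \leq C\bigl(\varphi_2(x)+1+|D\bu|^{\frac{n+2}{n}}\bigr),
\]
so the problem may be read as a linear divergence-form system $\div(\bA D\bu) = \div \bF + \bg$ with $\bF = -\ba(\cdot,\bu) \in L^{p,\lambda}$ and $\bg = \bb(\cdot,\bu,D\bu)$ dominated by $\varphi_2 \in L^{q,\mu}$ plus the subquadratic term $|D\bu|^{(n+2)/n}$. For such systems with essentially bounded coefficients on a $(\delta,R)$ Reifenberg-flat domain, the Calder\'on--Zygmund--Morrey theory developed in \cite{BW1,BW,BP1,BPSh,Sf2} delivers an a priori estimate of the form
\[
\|D\bu\|_{s,\sigma;\Omega} \leq C\bigl(\|\bF\|_{s,\sigma;\Omega} + \|I_1 \bg\|_{s,\sigma;\Omega}\bigr),
\]
where the last term is handled by Adams' inequality (Lemma~\ref{lem2}).

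Finally, I would iterate. At stage $k$, assume $D\bu \in L^{s_k,\sigma_k}$. The Sobolev--Morrey embedding (Lemma~\ref{lem1}) together with the subquadratic exponent $(n+2)/n<2$ then gives $|D\bu|^{(n+2)/n} \in L^{\tilde s_k,\tilde\sigma_k}$ whose parameters, after application of Lemma~\ref{lem2}, are strictly better than $(s_k,\sigma_k)$; the linear estimate above upgrades $(s_k,\sigma_k)$ to $(s_{k+1},\sigma_{k+1})$. The gain at each step is governed by the Sobolev--Morrey conjugate \eqref{eqSM}, and the iteration terminates once the contribution of $\bg$ ceases to be the bottleneck, at which point one reads off
\[
r = \min\{p,\, q^*_\mu\}, \qquad \nu = \min\Bigl\{n+\tfrac{r(\lambda-n)}{p},\; n+\tfrac{r(\mu-n)}{q^*_\mu}\Bigr\},
\]
as in \eqref{qast}--\eqref{gradient}. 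The main obstacle, I expect, is the bookkeeping of the Morrey parameters through the iteration: one must check that the $\bF$-contribution and the $\bg$-contribution combine to give exactly the minimum in \eqref{gradient}, that the loop closes in finitely many steps, and that the smallness requirement on $\delta$ inherited from each application of the linear $L^{s,\sigma}$-theory is uniform throughout the bootstrap.
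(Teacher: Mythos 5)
Your architecture follows the paper's: $L^\infty$-bound from \cite{PSf3} via the componentwise coercivity, freezing $\bu$ in the nonlinear terms to obtain a linear divergence-form system whose data are controlled by $\varphi_1$, $I_1\varphi_2$ and $I_1|D\bu|^{\frac{n+2}{n}}$ (the paper realizes your $\|I_1\bg\|$-term by writing $\bg=\div(\grad \NN\bg)$ through the Newtonian potential, so that the Byun--Wang estimate \cite{BW} for divergence data applies), then Adams' inequality and an iteration; your single bootstrap in $(s_k,\sigma_k)$ versus the paper's two separate loops (first the Lebesgue exponent up to $r$, then the Morrey exponent up to $\nu$ via \cite{BSf}) is an inessential difference. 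But there is a genuine gap at the base of your iteration. The claim that each stage produces parameters ``strictly better'' than $(s_k,\sigma_k)$ fails exactly at the only a priori information available for a weak solution, namely $D\bu\in L^{2}=L^{2,0}$: with $s=2$, $\sigma=0$ one has $|D\bu|^{\frac{n+2}{n}}\in L^{\frac{2n}{n+2}}$ and $\bigl(\tfrac{2n}{n+2}\bigr)^{*}=2$, so your linear estimate returns $s_1=\min\{p,q^*_\mu,2\}=2$ and the loop is stationary; the map $s\mapsto\bigl(\tfrac{sn}{n+2}\bigr)^{*}$ is expansive only for $s>2$. The paper supplies the missing initial kick: Gehring--Giaquinta higher integrability (reverse H\"older), carried up to the boundary of the Reifenberg domain through the Byun--Wang lemma \cite{BW2} on inner neighborhoods, and then the global result of \cite{Sf2} giving $|D\bu|\in L^{r_0}$ with $r_0=\min\{p,q^{*}\}>2$, which is the true starting point of the bootstrap. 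Without some such strict initial gain your scheme never leaves $L^2$.

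Two smaller omissions. First, the termination argument needs the bookkeeping the paper carries out: one must check that if $\bigl(\tfrac{r_kn}{n+2}\bigr)^{*}$ is the minimum then necessarily $r_k=q^{*}<n+2$ (the case $r_k=p$ is impossible since $p>2$), so the sequence $\{r_k\}$ is strictly increasing and exceeds $\min\{p,q^*_\mu\}$ after finitely many steps; similarly in the Morrey stage one must verify that $\theta_{k+1}=r-2+\theta_k\tfrac{n+2}{n}$ increases to the stated $\nu$. Second, the case $n=2$ is covered by the theorem but not by your exponents: there $2^*$ is arbitrary large and the gradient growth is $|\bz|^{2-\epsilon}$, so the whole iteration must be rerun with $\bigl(\tfrac{r_k}{2-\epsilon}\bigr)^{*}$ in place of $\bigl(\tfrac{r_kn}{n+2}\bigr)^{*}$, as the paper does separately.
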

\begin{rem}\em
If we take  bounded weak solution of \eqref{DP}, i.e., $\bu\in W_0^{1,r}\cap L^\infty(\Omega; \R^N) $ we can substitute  the coercivity condition \eqref{coercivity} with a uniform ellipticity condition. In this case we may suppose the principal coefficients to be discontinuous with small discontinuity controlled by their $BMO$ modulus. Precisely,  we suppose that  
\begin{align*}
\sup_{0<\rho\leq R}\sup_{y\in\Omega}&\  \Xint-_{\Omega_\rho(y)} |A^{\alpha\beta}_{ij}(x)-
\overline{A^{\alpha\beta}_{ij}}_{\Omega_\rho(y)}|^2\, dx\leq \delta^2,\\
&\overline{A^{\alpha\beta}_{ij}}_{\Omega_\rho(y)}=\Xint-_{\Omega_\rho(y)} {A^{\alpha\beta}_{ij}}(x)\, dx,
\end{align*}
where $\delta\in(0,1)$ is the same parameter as in \eqref{eq2}.  
The small $BMO$ successfully substitute the   $VMO$ in the study of PDEs with discontinuous coefficients,  harmonic analysis and integral operators studying,  geometric measure analysis and differential geometry (see \cite{BP1, BSf, BW1, GPVC, PS2, Sf1} and the references therein).
A higher integrability result for such kind of operators  can be found in \cite{DK,PS2,Sf2} for equations and  systems, respectively. 
\end{rem}
\begin{proof}
The essential boundedness of the solution follows by \cite{PSf3} (see also \cite{Sf3,Sf1}).  Precisely, there exists a constant depending on $n,$ $\Lambda,$ $p,$ $q,$ $\|\varphi_1\|_{L^p(\Omega)},$ $ \|\varphi_2\|_{L^q(\Omega)}$ and $\|D\bu\|_{L^2(\Omega)}$  such that 
\begin{equation}\label{boundedness}
\|\bu\|_{\infty, \Omega}\leq M\,.
\end{equation}
 Let  the solution and the functions $\varphi_1$ and $\varphi_2$ be extended as zero outside $\Omega.$ 
By the Definition~\ref{Morrey} we have that 
 $ \varphi_{1} \in L^{p}(\Omega) $ and  $ \varphi_{2} \in L^{q}(\Omega). $  In \cite{G} Giaquinta  show that there exists an exponent $\tl r>2$ such that $\bu\in W_\loc^{1,\tl r}(\Omega;\R^N).$ His  approach is based on the reverse H\"older inequality and a version of  Gehring's lemma. Since the Cacciopoli-type inequalities hold up to the boundary, this method can be carried out up to the boundary and it is done in \cite[Chapter 5]{G} for the Dirichlet problem in Lipschitz domain (see also \cite{Ar,C,DK,Sf2}).
In \cite{BW2}   the authors  have shown that an 
 inner neighborhood of $(\delta,R)$--Reifenberg flat domain is a Lipschitz domain with the  $(\delta,R)$-Reifenberg flat
property.  More precisely, we dispose with the following result.
\begin{lem}(\cite{BW2})
Let $\Omega$  be a $(\delta,R)$-Reifenberg flat domain for sufficiently small  $\delta>0.$ Then for any $0<\varepsilon<\frac{R}{5}$ the set $\Omega_\varepsilon=\{ x\in\Omega:\dist)x,\partial\Omega)>\varepsilon  \} $ 
 is a Lipschitz domain  with the  property \eqref{eq2}.
\end{lem}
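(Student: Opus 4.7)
The strategy is to reduce the local geometry of $\partial\Omega_\varepsilon$ at scale $\varepsilon$ to the geometry of $\partial\Omega$ at a slightly larger scale, where Reifenberg flatness with parameter $\delta$ is available. Fix $y\in\partial\Omega_\varepsilon$, so $\dist(y,\partial\Omega)=\varepsilon$, and pick $x\in\partial\Omega$ with $|y-x|=\varepsilon$. Set $\rho=5\varepsilon<R$ and apply Definition~\ref{reifenberg} at $x$ to obtain a coordinate system $(y_1,\dots,y_n)$ centred at $x$ with
\begin{equation*}
\B_\rho(x)\cap\{y_n>\delta\rho\}\subset\Omega\cap \B_\rho(x)\subset\B_\rho(x)\cap\{y_n>-\delta\rho\}.
\end{equation*}
Because $\partial\Omega\cap\B_\rho(x)$ lies in the slab $\{|y_n|\le\delta\rho\}$ and $y\in\Omega$ with $|y-x|=\varepsilon$, an elementary computation forces $y_n\ge\varepsilon-C\delta\rho$, hence $|y-(0,\varepsilon)|\le C\sqrt{\delta}\,\varepsilon$ in these coordinates: $y$ sits essentially on the inner parallel hyperplane $\{y_n=\varepsilon\}$.

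Next I would deduce the flatness property \eqref{eq2} for $\Omega_\varepsilon$ at $y$. For any $z\in\partial\Omega_\varepsilon\cap\B_{\varepsilon/2}(y)$: if $z_n>\varepsilon+2\delta\rho$, then $\dist(z,\{y_n\le\delta\rho\})>\varepsilon$, which combined with the outer slab inclusion implies $\dist(z,\partial\Omega)>\varepsilon$, contradicting $z\in\partial\Omega_\varepsilon$; the symmetric argument using the inner inclusion of $\{y_n>\delta\rho\}$ in $\Omega$ handles $z_n<\varepsilon-2\delta\rho$. Thus $\partial\Omega_\varepsilon\cap\B_{\varepsilon/2}(y)\subset\{|y_n-\varepsilon|\le C\delta\varepsilon\}$, and the analogous half-ball inclusions for $\Omega_\varepsilon$ itself follow by comparing $\Omega_\varepsilon$ with the parallel slab at height $\varepsilon$. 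After translating coordinates so that $y$ becomes the origin, this is exactly \eqref{eq2} at scale $\varepsilon/2$ with a new flatness parameter $C\delta$, which remains small provided $\delta$ is small.

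To upgrade this slab condition to a Lipschitz graph, I would use the $1$-Lipschitz signed distance $d(z)=\dist(z,\partial\Omega)$ and represent $\partial\Omega_\varepsilon\cap\B_{\varepsilon/2}(y)=\{d=\varepsilon\}$ as a graph $y_n=\varphi(y')$ over the transverse variables. The key point is monotonicity of $d$ in the $y_n$ direction: for $z,z+t e_n$ in this ball with $0<t<\varepsilon/4$, the nearest point of $z+t e_n$ on $\partial\Omega$ still lies in $\B_\rho(x)$, hence in the slab $\{|y_n|\le\delta\rho\}$, which forces $d(z+t e_n)-d(z)\ge(1-C\delta)t$. This vertical monotonicity yields the graph structure, and applying the slab estimate of the previous paragraph at the scale $|y'_1-y'_2|$ to each pair of graph points $(y'_j,\varphi(y'_j))$ gives $|\varphi(y'_1)-\varphi(y'_2)|\le C\delta\,|y'_1-y'_2|$, so $\varphi$ is Lipschitz with constant $C\delta<1$ once $\delta$ is small enough.

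The main obstacle is this vertical monotonicity step, since Reifenberg flatness is purely a slab condition and does not on its own prevent a vertical line from meeting $\partial\Omega_\varepsilon$ more than once. Overcoming it requires exploiting the fact that the approximating hyperplanes at different points of $\partial\Omega\cap\B_\rho(x)$ are almost parallel, with normal directions differing by $O(\delta)$; this coherence forces the nearest-point projection onto $\partial\Omega$ to be well-controlled on a neighbourhood of $y$ of size $\sim\varepsilon$, so that $d$ behaves like a small perturbation of the coordinate function $y_n$. Once this is in place, both the graph property and the Lipschitz constant $C\delta$ follow directly from the slab estimates, giving that $\Omega_\varepsilon$ is simultaneously Lipschitz and Reifenberg-flat with parameter $C\delta$.
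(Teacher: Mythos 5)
The paper offers no proof of this lemma to compare against: it is quoted directly from Byun--Wang \cite{BW2}, so I can only assess your outline on its own terms. Your first two steps are essentially sound: placing $y$ within $C\sqrt{\delta}\,\varepsilon$ of the inner parallel plane $\{y_n=\varepsilon\}$, and trapping $\partial\Omega_\varepsilon\cap\B_{\varepsilon/2}(y)$ in a slab of width $C\delta\varepsilon$, both follow from the single application of \eqref{eq2} at $x$ at scale $\rho=5\varepsilon$ (note that after recentering at $y$ the flatness parameter you actually get is $C\sqrt{\delta}$, not $C\delta$, because of the $C\sqrt{\delta}\,\varepsilon$ offset of $y$ from the plane; this is harmless).

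The genuine gap is in the Lipschitz step. You claim $|\varphi(y_1')-\varphi(y_2')|\le C\delta\,|y_1'-y_2'|$ by ``applying the slab estimate of the previous paragraph at the scale $|y_1'-y_2'|$'', but no slab estimate at scales $s\ll\varepsilon$ is available from your construction: the portion of $\partial\Omega$ that determines the level set $\{d=\varepsilon\}$ near $y$ always sits at distance $\varepsilon$, so any slab produced this way has width of order $\delta\varepsilon$ independently of $s$, which yields only $|\varphi(y_1')-\varphi(y_2')|\le C\delta\varepsilon$ and a Lipschitz constant $\delta\varepsilon/s$ that blows up as $s\to0$. For the same reason your verification of \eqref{eq2} for $\Omega_\varepsilon$ holds only at the single scale $\varepsilon/2$, whereas the definition requires all scales below a fixed threshold. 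The correct mechanism --- which you gesture at in your final paragraph but do not execute --- is to control the direction field rather than the level set: show that for every $z$ in an $\varepsilon$-neighbourhood of $y$ and every nearest point $w_z\in\partial\Omega$, the unit vector $(z-w_z)/|z-w_z|$ makes an angle at most $C\sqrt{\delta}$ with the fixed direction $e_n$ (this needs the coherence lemma that approximating hyperplanes of a Reifenberg-flat set at nearby points and comparable scales differ by $O(\delta)$, which you assert but do not prove), deduce $|\nabla d-e_n|\le C\sqrt{\delta}$ a.e.\ there, and conclude that $d$ is strictly increasing along every direction in the cone $\{u:\,u_n>C\sqrt{\delta}|u|\}$. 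That single estimate gives the graph property, the Lipschitz constant $C\sqrt{\delta}$, and --- since a graph with Lipschitz constant below $\delta$ satisfies \eqref{eq2} at all scales, as the paper itself recalls --- the Reifenberg property of $\Omega_\varepsilon$. A further small inaccuracy: your monotonicity rate $d(z+te_n)-d(z)\ge(1-C\delta)t$ does not follow from the slab inclusion alone when $t$ is comparable to $\varepsilon$; the elementary computation yields only a rate bounded below by an absolute constant, though that weaker statement still suffices for the graph property.
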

This lemma  permits us to extend the results of  \cite[Chapter 5]{G} in   Reifenberg-flat domains.
Further, by    \cite{Sf2}     $|D\bu|$ belongs at least to $ L^{r_0}(\Omega)$ with $r_0=\min\{p,q^*\}>\frac{n}{n+2}.$

Let \fbox{$n> 2$} and     $\bu\in W_0^{1,{r_0}}(\Omega;\R^N)\cap L^\infty(\Omega;\R^N)$  be a solution to   \eqref{DP}.   Our {\it first step}  is to improve its integrability.  Fixing that solution in the nonlinear terms we get the linearized problem
\begin{equation}\label{eq3}
\begin{cases}
 D_\alpha\big(A^{\alpha\beta}_{ij}(x) D_\beta   u^{j} (x)\big) 
 \big)= f_i(x) - D_\alpha A_i^\alpha(x) &  x\in\Omega \\
\bu(x)=0\quad  &  x\in  \partial\Omega
\end{cases}
\end{equation}
where we have used the notion 
$$
f_i(x)= b_i(x,\bu,D\bu),\qquad A_i^\alpha(x)= a_i^\alpha(x,\bu),  
$$

By  \eqref{contr1}, \eqref{contr2} and \eqref{boundedness}  we get
\begin{equation}\label{est1}
|A_i^\alpha(x)| \leq \Lambda \left(\varphi_1(x)+ |\bu(x)|^{\frac{n}{n-2}}\right)
\end{equation}
that gives $A_i^\alpha(x)\in L^{p,\lambda}(\Omega) $ with $p>2$ and $p+\lambda>n.$  Analogously
\begin{equation}\label{est2}
|f_i(x)|\leq \Lambda\left( \varphi_2(x) +|\bu|^{\frac{n+2}{n-2}} +|D\bu|^{\frac{n+2}{n}}  \right)\,.
\end{equation}
Since $|D\bu|\in L^{r_0}(\Omega)$  we get  $|D\bu|^{\frac{n+2}{n}}\in L^{\frac{r_0n}{n+2}}(\Omega)$ that gives
 $f_i\in L^{q_1}(\Omega)$ where   $q_1=\min\{q,\frac{r_0 n}{n+2}\}.$

Let $\Gamma$ be the fundamental solution of the Laplace operator. Recall that the {\it  Newtonian potential\/}  of $f_i(x)$ is  given by
$$
\NN f_i(x)=\int_\Omega \Gamma(x-y) f_i(y)\, dy, \qquad \Delta \NN f_i(x)=f_i(x) \  \text{ for a.a. } x\in \Omega
$$
and by \cite[Theorem~9.9]{GT} we have that  $\NN f_i\in W^{2,q_1}(\Omega).$
Denote by 
$$
F_i^\alpha(x)=D_\alpha \NN f_i(x)=C(n)\int_\Omega \frac{(x-y)_\alpha f_i(y)}{|x-y|^n}\,dy   \quad  \text{ for a.a. } x\in \Omega
$$ 
and $\F_i=(F_i^1,\ldots,F_i^n)=\grad \NN f_i.$ Hence $\div \F_i=f_i$ and 
\begin{equation}\label{eq4}
\begin{cases}
 D_\alpha\big(A^{\alpha\beta}_{ij}(x) D_\beta   u^{j} (x)\big) 
 \big)=  D_\alpha ( F_i^\alpha(x)-A_i^\alpha(x)) &  x\in\Omega \\
\bu(x)=0\quad  & x\in \partial\Omega
\end{cases}
\end{equation}
By \eqref{est1} and \eqref{est2} we get
\begin{align}\label{FAest}
\nonumber
|F_i^\alpha(x)-A_i^\alpha(x)|  \leq &\  C(n,\Lambda)\int_\Omega \frac{ \varphi_2(y) +|\bu(y)|^{\frac{n+2}{n-2}} +|D\bu(y)|^{\frac{n+2}{n}}     }{|x-y|^{n-1}}\,dy\\
&\ + \Lambda \left(\varphi_1(x)+ |\bu(x)|^{\frac{n}{n-2}}\right)\\
\nonumber
 \leq &\  C\left(1+\varphi_1(x)+I_1\varphi_2(x)+ I_1 |D\bu(x)|^{\frac{n+2}{n}}  \right)
\end{align}
with a constant depending on  $n,\Lambda,$  and $ \|\bu\|_{\infty,\Omega}.$ 
By \eqref{eqRiesz} we get
\begin{align}\label{eq6}
&\|I_1\varphi_2\|_{L^{q^*_\mu,\mu}(\Omega)}\leq C\|\varphi_2\|_{L^{q,\mu}(\Omega)}\\
\label{eq6a}
&\| I_1|D\bu|^{\frac{n+2}{n}} \|_{L^{(\frac{r_0n}{n+2})^*}(\Omega)}\\
\nonumber
&\   \leq C  
\|\left| D\bu\right|^{\frac{n+2}{n}}  \|_{L^{\frac{r_0n}{n+2}}(\Omega)}\leq C\| D\bu  \|^{\frac{n+2}{n}}_{L^{r_0}(\Omega)}
\end{align}
where  $q^*_\mu$ is the Sobolev-Morrey conjugate of $q$ and 
$$
\left(\frac{r_0n}{n+2} \right)^*=\begin{cases}
\ds \frac{r_0n}{n+2-r_0} & \text{ if } r_0<n+2\,,\\
\ds \text{arbitrary large number } & \text{ if } r_0\geq n+2\,.
\end{cases}
$$
Hence $F_i^\alpha - A_i^\alpha \in L^{r_1}(\Omega)$ with $ r_1=\min\{p, q^*_\mu, (\frac{r_0n}{n+2})^* \}.$ If  
$ r_1=\min\{p, q^*_\mu \}$ then we have the assertion, otherwise $ r_1=(\frac{r_0n}{n+2})^* $ and we consider two cases:
\begin{enumerate} 
\item $r_0=p $
that leads to  $ p>  (\frac{pn}{n+2})^*$ which is impossible;
\item $r_0=q^*$ and  we consider  two subcases:
\begin{itemize}
\item[2a)] $q^*\geq n+2$ which means that $r_1$ is arbitrary large number and we arrive to contradiction with the assumption $r_1<\min\{p, q^*_\mu\};$

\item[2b)] $q^*<n+2$ hence $r_1=\frac{q^*n}{n+2-q^*}.$
\end{itemize}
\end{enumerate} 
 Applying \cite[Theorem~1.7]{BW} to the linearized  system \eqref{eq4}  we get that  for each  matrix function 
$\F- \A\in L^{r_1}(\Omega;\M^{N\times n}),$ 
with $r_1=\frac{q^*n}{n+2-q^*} $   
 holds 
$\bu\in  W_0^{1,r_1}\cap L^\infty(\Omega;\R^N)$ with the estimate
$$
\|D\bu\|_{r_1,\Omega}\leq C \|\F- \A\|_{r_1,\Omega}.
$$
Here  $ \A(x)=\{A_i^\alpha(x)\}_{i\leq N}^{\alpha\leq n}$ and $\F(x)=\{F_i^\alpha(x)\}_{i\leq N}^{\alpha\leq n}.$
Let us note that this  estimate is valid  for each solution of $\eqref{eq4}$ including $\bu.$
Repeating the above  procedure for  $\bu\in W^{1,r_1}(\Omega;\R^N)\cap L^\infty(\Omega;\R^N)$ 
we get that
$$
|D\bu|\in L^{r_2}(\Omega)\qquad   r_2=\min\Big\{p,q^\ast_\mu,  \big(\frac{r_1n}{n+2}\big)^* \Big\}.
$$
 If  
$ r_2=\min\{p, q^*_\mu \}$ then we have the assertion, otherwise $ r_2=(\frac{r_1n}{n+2})^*>r_1 $ and we repeat the arguments of the previous case. In such a way  we get an increasing sequence of indexes $\{r_k\}_{k\geq 0}.$  After 
$k'$ iterations we obtain $r_{k'}\geq \min\{p,q^*_\mu\} $  and 
\begin{equation}\label{r2}
\|D\bu\|_{r,\Omega}\leq C \|\F- \A\|_{r,\Omega}  \quad \text{ with } \quad r=\min \{p,q^*_\mu\}.
\end{equation}

The {\it second step} consists of showing that the gradient lies in a suitable  Morrey  space.  Suppose that $|D\bu|\in L^{r,\theta}(\Omega)$ with {\it  arbitrary} $\theta\in [0,n).$ Direct calculations give that $|D\bu|^{\frac{n+2}{n}}\in L^{\frac{rn}{n+2},\theta},$ i.e.   
$$
 \left(\frac{1}{\rho^\theta}\int_{\B_\rho} |D\bu|^{\frac{n+2}{n}\frac{rn}{n+2} } \,dx \right)^{\frac{n+2}{rn}}=
 \left(\frac{1}{\rho^\theta}\int_{\B_\rho} |D\bu|^r \,dx \right)^{\frac{n+2}{rn}}\leq 
\|D\bu\|_{r,\theta;\Omega}^{\frac{n+2}{n}}.
$$
 Keeping in mind \eqref{FAest} and \eqref{eqRiesz} we get 
$$
I_1 |D\bu|^{\frac{n+2}{n}}\in L^{(\frac{nr}{n+2})_\theta^*,\theta}(\Omega) 
$$
while $\varphi_1\in L^{p,\lambda}(\Omega)$ and $I_1\varphi_2\in L^{q^*_\mu,\mu}(\Omega).$ 

Further by the H\"older inequality we get the estimates 
\begin{align*}
&\left( \frac{1}{\rho^{n-\frac{n-\lambda}{p}r}  }\int_{\B_\rho}\varphi_1(x)^r\,dx \right)^\frac{1}{r}\leq C(n) \|\varphi_1\|_{p,\lambda;\Omega}\\
&\left( \frac{1}{\rho^{n-\frac{n-\mu}{q^*_\mu}r}  }\int_{\B_\rho}(I_1\varphi_2(x))^r\,dx \right)^\frac{1}{r}\leq
 C(n) \|I_1\varphi_2\|_{q^*_\mu,\mu;\Omega}
\end{align*}
that implies  $\varphi_1\in L^{r,n-\frac{n-\lambda}{p}r }(\Omega)$ and 
$I_1\varphi_2\in L^{r, n-\frac{n-\mu}{q^*_\mu}r}(\Omega).$

As it concerness the potential $I_1|D\bu|^{\frac{n+2}{n}}$ we consider two cases:
\begin{enumerate}
\item
$n-\theta\leq \frac{rn}{n+2}$ then $\big(\frac{nr}{n+2}\big)_\theta^*$ is arbitrary large number and we can take it such that 
$I_1|D\bu|^{\frac{n+2}{n}}\in L^r(\Omega);$
\item 
$n-\theta> \frac{rn}{n+2}$ then by the imbeddings   between  the  Morrey spaces  we have 
$$
L^{(\frac{nr}{n+2})_\theta^*,\theta}(\Omega)\subset L^{r, r-2 + \theta\frac{n+2}{n}}(\Omega)\,.
$$
Then 

$$
| F_i^\alpha-A_i^\alpha|\in L^{r,\min\{ r-2+\theta \frac{n+2}n, n-\frac{n-\lambda}{p}r, n-\frac{n-\mu}{q^*_\mu}r  \}}(\Omega)
$$
which implies via \cite[Theorem 5.1]{BSf} that the gradient of the solution of the  linearized problem satisfies
$$
|D\bu|\in    L^{r,\min\{ r-2+\theta \frac{n+2}n, n-\frac{n-\lambda}{p}r, n-\frac{n-\mu}{q^*_\mu}r  \}}(\Omega)\,.
$$
\end{enumerate}
In order to determine the optimal $\theta$ we use step-by-step arguments starting with the result obtained in the first step  and taking   as $\theta_0 =0.$ Suppose that
$$
r-2< \min\left\{  n-\frac{ n-\lambda }{p} r, n-\frac{n-\mu}{q^*_\mu}r  \right\},
$$ 
otherwise we have the assertion.
Repeating   the above procedure  with $\bu$ such that $|D\bu|\in L^{r,\theta_1}(\Omega)$ with $\theta_1=r-2$  we obtain
$$
|D\bu|\in    L^{r,\theta_2}(\Omega)
$$
 with 
$$ 
\theta_2=\min\left\{r-2+\theta_1\frac{n+2}n, n-\frac{n-\lambda}{p}r, n-\frac{n-\mu}{q^*_\mu}r  \right\}\,.
$$
If $\theta_2=\min\{ n-\frac{n-\lambda}{p}r, n-\frac{n-\mu}{q^*_\mu}r  \}$ we have the assertion,   otherwise we take $\theta_2= r-2+\theta_1\frac{n+2}n=(r-2)(1+\frac{n+2}{n}).$

 Iterating we obtain
an  increasing sequence
 $\{\theta_k =(r-2)\sum_{i=0}^{k-1}(\frac{n+2}{n})^i  \}_{k\geq 1}.$ Then there exists an index $k''$ for which 
$$
r-2+ \theta_{k''}\frac{n+2}{n}\geq \min\Big\{ n-\frac{n-\lambda}{p}r, n-\frac{n-\mu}{q^*_\mu}r \Big \}
$$
that gives the assertion. 

If  \fbox{$n=2$}  then the growth conditions have the form
\begin{align}\label{contr3}
 & |\ba(x,\bu) |\leq  \Lambda(\varphi_1(x)+|\bu|^{\varkappa}), \\
\nonumber
& \qquad\qquad   \varphi_1\in L^{p,\lambda}  (\Omega), \quad   p>2 , \      p+\lambda>n, \  \lambda\in[0,n),\\
\label{contr4}
&  |\bb(x,\bu,\bz)|\leq   \Lambda\big(\varphi_2(x) + |\bu|^{\varkappa -1}+|\bz|^{2-\epsilon}  \big), \\
\nonumber
&\qquad \qquad  \varphi_2\in L^{q,\mu} (\Omega), \quad   q>1, \    2q+\mu>n, \  \mu\in[0,n)
\end{align}
with $\varkappa>1$ arbitrary large number  and $\epsilon>0$ arbitrary small.  

Fixing again the solution $\bu\in W_0^{1,r_0}(\Omega;\R^N)\cup L^\infty (\Omega;\R^N)$  in the nonlinear terms and using the Lemma~\ref{lem1} and Lemma~\ref{lem2} we obtain
$$
F_i^\alpha-A_i^\alpha\in L^{r_1}(\Omega)\qquad r_1=\min\Big\{p,q^*_\mu,\Big( \frac{r_0}{2-\epsilon} \Big)^*   \Big\}\,.
$$
If $r_1=  \big( \frac{r_0}{2-\epsilon} \big)^* $ then the only possible value for $r_0$ is $r_0=q^*$ and hence $r_1=\frac{2q^*}{2(2-\epsilon)-q^*},$ otherwise we rich to contradiction.  Then by \cite{BW} we get 
$
| D\bu|\in L^{r_1}(\Omega).
$
Repeating the above procedure with $\bu\in   W_0^{1,r_1}\cap L^\infty(\Omega;\R^N)$ we obtain that
$$
|D\bu|\in L^{r_2}(\Omega)\qquad r_2=\min\Big\{p,q^*_\mu, \Big(  \frac{r_1}{2-\epsilon}\Big)^*   \Big\}\,.
$$
If 
$$
r_2=\Big( \frac{r_1}{2-\epsilon} \Big)^*<\min\{ p,q^*_\mu \}
$$
 we repeat the same  procedure  obtaining an increasing sequence $\{r_k\}_{k\geq 0}. $ Hence there exist an index $k_0$ such that $r_{k_0}\leq \min\{p,q^*_\mu\} $ that gives the assertion.

To obtain Morrey's regularity we take $|D\bu|\in L^{r,\theta}(\Omega)$  with {\it arbitrary} $\theta\in [0,2).$ Hence 
$|D\bu|^{2-\epsilon}\in L^{\frac{r}{2-\epsilon},\theta}(\Omega).$ By Lemma~\ref{lem1} and   Lemma~\ref{lem2}
 we obtain
\begin{align*}
\varphi_1&\in L^{p,\lambda}(\Omega)\subset L^{r,2-\frac{2-\lambda}{p}r}(\Omega)\\
I_1\varphi_2&\in L^{q^*_\mu,\mu}(\Omega)\subset L^{r,2-\frac{2-\mu}{q^*_\mu}r}(\Omega)\\
I_1|D\bu|^{2-\epsilon}&\in L^{(\frac{r}{2-\epsilon})_\theta^*,\theta}(\Omega)\subset L^{r, r-2(1-\epsilon)+\theta(2-\epsilon)}(\Omega).
\end{align*}
Hence  the Calder\'on-Zygmund estimate for the linearized problem (see \cite{BSf}) gives
$$
|D\bu|\in L^{r,\min\{2-\frac{2-\lambda}{p}r,2-\frac{2-\mu}{q^*_\mu}r,r-2(1-\epsilon)+\theta(2-\epsilon)\}}(\Omega)\,.
$$
To determine the precise Morrey space we applay the   step-by-step procedure. 
\begin{enumerate}
\item 
Since the last term is minimal when $\theta=0$ than we start with   an this initial value  $\theta_0=0.$ 
Suppose that 
$$
r-2(1-\epsilon)<\min\Big\{ 2-\frac{2-\lambda}{p}r, 2-\frac{2-\mu}{q^*_\mu}r  \Big\}<2
$$
(otherwise we have the assertion) and denote $\theta_1=r-2(1-\epsilon).$
\item Take $|D\bu|\in L^{r,\theta_1}(\Omega).$ The  above  procedure gives
$
|D\bu|\in L^{r,\theta_2} (\Omega)$    with 
$$
 \theta_2=\min\Big\{2-\frac{2-\lambda}{p}r, 2-\frac{2-\mu}{q^*_\mu}, 
r-2(1-\epsilon) +\theta_1(2-\epsilon)   \Big\}\,.
$$ 
If   $\theta_2=r-2(1-\epsilon) +\theta_1(2-\epsilon)$   (otherwise we have the assertion)
 then we continue with the same procedure obtaining the  sequence defined by recurrence
$$
\theta_0=0,\quad \theta_k=r-2(1-\epsilon) +\theta_{k-1}(2-\epsilon).
$$ 
\item
Since $r>2,$ hence the sequence is increasing  and there exists an index $\ol k$ such that 
$$
\theta_{\ol k}\geq \min\Big\{  2-\frac{2-\lambda}{p}r, 2-\frac{2-\mu}{q^*_\mu}r  \Big\}
$$
which is the assertion. 
\end{enumerate}
\end{proof}
\begin{crlr}
Supposing  the conditions of Theorem \ref{th1}, for any fixed $i=1,\ldots,N$ holds  $u^i\in C^{0,\alpha}(\Omega)$
with $\alpha=\min\big\{ 1-\frac{n-\lambda}{p},1-\frac{n-\mu}{q^*_\mu}   \big\}\,$ and for any ball $\B_\rho(z)\subset\Omega$
$$
\underset{\B_\rho(z)}{\rm osc} u^i\leq C \rho^\alpha\,.
$$
\end{crlr}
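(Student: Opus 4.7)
The plan is to turn the Morrey regularity of the gradient, supplied by Theorem~\ref{th1}, into a Campanato-type estimate for each component $u^i$, and then invoke the Campanato isomorphism with the H\"older spaces to read off the exponent $\alpha$.

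First I would fix a ball $\B_\rho(z)\subset\Omega$ and apply the Poincar\'e inequality componentwise: for each $i=1,\ldots,N$,
$$
\int_{\B_\rho(z)} |u^i(x)-(u^i)_{\B_\rho(z)}|^r\,dx \le C\,\rho^{\,r}\int_{\B_\rho(z)} |Du^i|^r\,dx,
$$
where $(u^i)_{\B_\rho(z)}$ denotes the average. Theorem~\ref{th1} gives $|D\bu|\in L^{r,\nu}(\Omega)$ with $r=\min\{p,q^*_\mu\}$ and $\nu=\min\{n+r(\lambda-n)/p,\, n+r(\mu-n)/q^*_\mu\}$, hence
$$
\int_{\B_\rho(z)} |Du^i|^r\,dx \le \|D\bu\|^r_{r,\nu;\Omega}\,\rho^{\nu}.
$$
Combining the two displays shows that each $u^i$ belongs to the Campanato space $\mathcal L^{r,\,r+\nu}(\Omega)$, with the corresponding seminorm controlled by $\|D\bu\|_{r,\nu;\Omega}$.

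The second step is to check that $r+\nu>n$, so that Campanato's theorem identifies $\mathcal L^{r,r+\nu}(\Omega)$ with $C^{0,\alpha}(\Omega)$ for $\alpha=(r+\nu-n)/r=1-(n-\nu)/r$. The inequality $n+r(\lambda-n)/p>n-r$ is equivalent to $p+\lambda>n$, which is exactly the hypothesis in \eqref{contr1}. Similarly, $n+r(\mu-n)/q^*_\mu>n-r$ reduces, after using \eqref{eqSM}, to the assumption $2q+\mu>n$ in \eqref{contr2} (when $q+\mu\ge n$ the Sobolev--Morrey conjugate is arbitrarily large and the inequality is trivial). Thus the Campanato embedding applies and produces the H\"older exponent
$$
\alpha=\min\Big\{1-\tfrac{n-\lambda}{p},\ 1-\tfrac{n-\mu}{q^*_\mu}\Big\},
$$
precisely as claimed.

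Finally, to deduce the stated oscillation bound, I would combine the pointwise H\"older estimate just obtained with the telescoping argument that comes with the Campanato characterisation: the $L^r$-mean oscillation of $u^i$ on $\B_\rho(z)$ is bounded by $C\rho^{\nu/r+1}=C\rho^{\alpha+n/r}$, and comparing averages over dyadically shrinking balls yields the pointwise inequality $\mathrm{osc}_{\B_\rho(z)}u^i\le C\rho^\alpha$. I do not anticipate a genuine obstacle here: all the analytic work has been done in Theorem~\ref{th1}, and the remaining argument is the classical Morrey--Campanato embedding applied to each scalar component; the only point deserving verification is the two inequalities $p+\lambda>n$ and $2q+\mu>n$ ensuring $r+\nu>n$, which are already part of the standing assumptions.
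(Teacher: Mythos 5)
Your argument is correct, and it reaches the stated exponent by a route different from the paper's. The paper does not pass through Campanato spaces at all: it first uses H\"older's inequality to convert the bound $|D\bu|\in L^{r,\nu}(\Omega)$ into the $L^1$-Morrey growth estimate $\int_{\B_\rho(z)}|Du^i|\,dy\leq C\rho^{\,n-\frac{n-\nu}{r}}$, and then applies the classical pointwise representation $|u^i(x)-u^i_{\B_\rho(z)}|\leq C\int_{\B_\rho(z)}|Du^i(y)|\,|x-y|^{1-n}\,dy$ together with a dyadic integration over shells (Morrey's Dirichlet-growth lemma), which yields $\osc$-type control and the exponent $1-\frac{n-\nu}{r}=\alpha$ in four lines. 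You instead apply the Poincar\'e inequality on each ball to place $u^i$ in the Campanato class $\mathcal L^{r,\,r+\nu}$ with seminorm bounded by $\|D\bu\|_{r,\nu;\Omega}$, and invoke the Campanato--H\"older isomorphism; your verification that $r+\nu>n$, traced back to the standing hypotheses $p+\lambda>n$ and $2q+\mu>n$ (with the correct treatment of the case $q+\mu\geq n$ where $q^*_\mu$ is arbitrarily large), is a point the paper leaves implicit, so this is a genuine gain in explicitness. What the paper's route buys is self-containedness (no appeal to the Campanato embedding theorem, only the representation formula); what your route buys is a clean seminorm estimate $[u^i]_{C^{0,\alpha}}\leq C\|D\bu\|_{r,\nu;\Omega}$ and the automatic oscillation bound via the telescoping argument built into Campanato's theorem. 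Both give $\alpha=1-\frac{n-\nu}{r}=\min\{1-\frac{n-\lambda}{p},\,1-\frac{n-\mu}{q^*_\mu}\}$, so the conclusions agree.
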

\begin{proof}
By \eqref{gradient} we have that for each ball $\B_\rho(z)\subset \Omega$
$$
\int_{\B_\rho(z)}|Du^i(y)|\,dy\leq  C \rho^{n-\frac{n-\nu}{r}}.
$$
Then for any $x,y\in \B_\rho(z)$ and for each fixed $i=1,\ldots,N$ we have
\begin{align*}
|u^i(x)-u^i(y)|&  \leq 2|u^i(x)-u^i_{\B_\rho(z)}|\leq C \int_{\B_\rho(z)} \frac{Du^i(y)}{|x-y|^{n-1}}\,dy\\
& \leq   C\int_0^\rho\int_{\B_t(z)}|D u^i(y)|\, dy\, \frac{dt}{t^n}\leq C\rho^{1-\frac{n-\nu}{r}}\,.
\end{align*}
\end{proof}

\subsection*{Acknowledgments.} 
The  both authors are members of  {\it   INDAM-GNAMPA }.  The authors are indebted of the referee for the valuable suggestions that improved the exposition of the paper.

\end{document}